\pgfplotsset{compat=1.18}
\newcommand{\numD}[1]{\sisetup{scientific-notation=true,
	output-exponent-marker=e,round-mode=places,round-precision=1,table-format=1.4}\num{#1}}
    \newcommand{\numDi}[1]{\sisetup{scientific-notation=false,
	output-exponent-marker=e,round-mode=places,round-precision=3,table-format=1.4}\num{#1}}
\DeclareMathAlphabet{\mathsfit}{\encodingdefault}{\sfdefault}{m}{sl}
\SetMathAlphabet{\mathsfit}{bold}{\encodingdefault}{\sfdefault}{bx}{n}
\def \sumX{\sum_{x \in X}}
\def \sumY{\sum_{y \in X}}
\def \sumXY{\sum_{(x,y)\in X\times X}}
\def \sumNct{\sum_{(x,y)\in \Nct}}
\def \sumOct{\sum_{y \in \Oct}}
\def \sumIct{\sum_{x \in \Ict}}
\def \forallx{\forall x \in X}
\def \forally{\forall y \in X}
\def \forallxy{\forall (x,y) \in X\times Y}
\def \erre{\mathbb{R}}
\def \Nct{N_t}
\def \Oct{B_t(x)}
\def \Ict{B_t(y)}
\def \NFs{\mathcal{N}^{(t)}_{\mu,\nu}}
\def \MNFs{\Theta^{(t)}_{\mu,\nu}}
\def \bside{\mathbb{B}^{(t)}_s}
\def \PP{\mathcal{P}}
\def \MM{\mathcal{M}}
\def \Pimn{\Pi_{\mu,\nu}}
\def \erre{\mathbb{R}}
\def \minpi{\min_{\pi \in \Pimn}}
\def \maxNFs{\max_{\eta\in\NFs}}
\def \mn{(\mu,\nu)}
\def \sumX{\sum_{x \in X}}
\def \sumY{\sum_{y \in X}}
\def \sumXY{\sum_{(x,y)\in X\times X}}
\def \sumNct{\sum_{(x,y)\in \Nct}}
\def \sumOct{\sum_{y \in \Oct}}
\def \sumIct{\sum_{x \in \Ict}}
\def \forallx{\forall x \in X}
\def \forally{\forall y \in X}
\def \forallxy{\forall (x,y) \in X\times X}
\def \Nct{N_t}
\def \Oct{B_t(x)}
\def \Ict{B_t(y)}
\def \NFs{\mathcal{N}^{(t)}_{\mu,\nu}}
\def \MNFs{\Theta^{(t)}_{\mu,\nu}}
\def \bside{\mathbb{B}^{(t)}_s}
\DeclareMathOperator*{\argmin}{arg\,min}
\newtheorem{definition}{Definition} 
\newtheorem{corollary}{Corollary} 
\newtheorem{remark}{Remark}
\newtheorem{theorem}{Theorem} 
\newcommand{\minimal}{minimal }
\newcommand{\Minimal}{Minimal }
\begin{document}
\date{}
\title{On the computation of the infinity Wasserstein distance and the Wasserstein Projection Problem}
\author[1]{Gennaro Auricchio}
\author[2]{Gabriele Loli}
\author[2]{Marco Veneroni}
\affil[1]{Università di Padova, Dipartimento di Matematica ``Tullio Levi-Civita'', Via Trieste 63, Padua, 35131, Italy}
             
\affil[2]{Università di Pavia, Dipartimento di Matematica ``F. Casorati'', Via A. Ferrata 1, Pavia, 27100, Italy}
\maketitle

\begin{abstract}
Computing the infinity Wasserstein distance and retrieving projections of a probability measure onto a closed subset of probability measures are critical sub-problems in various applied fields.
However, the practical applicability of these objects is limited by two factors: either the associated quantities are computationally prohibitive or there is a lack of available algorithms capable of calculating them.
In this paper, we propose a novel class of Linear Programming problems and a routine that allows us to compute the infinity Wasserstein distance and to compute a projection of a probability measure over a generic subset of probability measures with respect to any $p$-Wasserstein distance with $p\in[1,\infty]$. 
\vskip 1mm
\noindent
\textbf{Keywords:}  Infinity Wasserstein distance, Wasserstein Projection Problem, Discrete Optimal Transport, Numerical Algorithms for Optimal Transport.
\end{abstract}

\section{Introduction}

Given a Polish space $X$ endowed with a metric $d:X\times X\to [0,\infty)$, the $p$-Wasserstein distance between two probability measures over $X$, namely $\mu,\nu\in\PP(X)$, is defined as
\begin{equation}
    \label{eq:introWp}
    W_p(\mu,\nu):=\bigg(\min_{\pi\in\Pi(\mu,\nu)}\int_{X\times X} d^p(x,y){\rm d}\pi(x,y)\bigg)^{\frac{1}{p}},
\end{equation}
where $\Pi(\mu,\nu)$ is the set of transportation plans between $\mu$ and $\nu$, that is,
\begin{equation}
\begin{split}
    \label{eq:transportationplans}
	\Pi(\mu,\nu):=\Big\{\pi\in\PP(X\times X):\pi(A\times X)= \mu(A),\\
	\pi(X \times B)=\nu(B),\quad \forall A,B\subseteq X\Big\}.
\end{split}
\end{equation}
Owing to their appealing mathematical properties, the Wasserstein distances and Optimal Transport problems have been used across several applied fields, ranging from Machine Learning  \cite{scagliotti2023normalizing,Cuturi2014,Frogner2015,pmlr-v70-arjovsky17a,Granger2024a} and Computer Vision \cite{Rubner98,Rubner2000,Pele2009,Granger2024b} to Partial Differential Equations \cite{Cipriani_Fornasier_Scagliotti_2025,brenier1991polar,carrillo2007contractive,mather1991action}, minimizing measures in Lagrangian dynamics \cite{evans2002linear,evans2003some,de2006minimal,granieri2007action}, and Algorithmic Game Theory \cite{auricchio2024k,daskalakis2013mechanism,auricchio2024extended}.
The growing popularity of these distances generated an increasing demand for efficient ways to solve the minimization problem that defines them.
Nowadays, there are different approaches to the minimization problem on the right-hand side of \eqref{eq:introWp}: the most popular methods are based on
\begin{enumerate*}[label=(\roman*)]
    \item the Sinkhorn's algorithm (see \cite{Cuturi2013,Solomon2015,Altschuler2017}), which solves a regularized version of the basic optimal transport problem,
    \item Linear Programming-based algorithms (see \cite{LingOkada2007,Auricchio2018,Bassetti2018}), which approximate or exactly solve the basic optimal transport problem by formulating and solving an equivalent uncapacitated minimum cost flow problem \cite{Orlin,Goldberg1989}, and
    \item Linear Programming-based heuristics, which aim at reducing the computational complexity of the LP Algorithms by simplifying the structure of the problem, as done in \cite{Pele2009, Auricchio2018}.
\end{enumerate*}
%
%
%

%
Despite the variety of methods developed to compute Wasserstein distances, there are still classes of subproblems connected to Optimal Transport that are either prohibitive to solve or have remained unaddressed.
Let us consider, for example, the infinity Wasserstein distance $W_{\infty}$, which is obtained by taking the limit of \eqref{eq:introWp} as $p$ goes to infinity.
The infinity Wasserstein distance has been widely studied in \cite{brizzi23,Pascale08,auricchio2022structure,Steinerberger21} and has been fruitfully applied in several fields, such as branching models \cite{Bernot06,maddalena03}, the Vlasov equation \cite{toscani04,carrillo07}, travel policies \cite{butpraste} and also astronomy \cite{McCann06stablerotating}.
However, owing to the fact that the computation of $W_{\infty}$ cannot be tied to a Linear Programming problem, retrieving the infinity Wasserstein distance between two measures is, to the best of our knowledge, still an open problem.
Indeed, to the best of our knowledge, only two other papers tackle the problem of computing the $W_\infty$ distance.
In \cite{bansil2021w}, the authors connect the Wasserstein problem with a perfect matching problem and use this connection to infer a set of conditions under which the optimal transportation plan is induced by a map.
The same paper also proposes a scheme to approximate the value of the infinity Wasserstein using a bisection-like method.
While the first paper has a combinatorial perspective on the problem, the second paper tackles the issue following a Sinkhorn-like approach \cite{Cuturi2013}.
Indeed, the authors use an entropic regularization term to approximate $W_\infty$ and the optimal plan inducing the distance \cite{brizzi2024entropic}.
Similarly, being able to project a probability measure $\mu\in\PP(X)$ over a closed and convex subset of $\PP(X)$ with respect to a Wasserstein distance $W_p$ is a sub-problem in crowd motion models \cite{dimm16,maury10}, where the density of the solution must satisfy an upper bound.
However, computing the exact projection is often impractical, and thus a heuristic approach based on the heat equation is preferred \cite{marino16}.
In this paper, we address this gap in the literature by introducing a class of Linear Programming problems and an iterative routine that, combined together, allow us to compute $W_{\infty}$ and to retrieve a projection of a generic measure $\mu$ onto a subset of probability measures with respect to any $p$-Wasserstein distance $W_p$ with $p\in[1,\infty]$.

\section{Preliminary Notions and Notation}
\label{sect:preliminaries}

In this section, we fix our notation and recall the basic notions of Optimal Transport. 
For a complete introduction to Optimal Transport we refer the reader to \cite{villani2009optimal}.
%

\subsection{Basic Notions about Discrete Optimal Transport}

Let $X$ be a discrete and finite set of points.
For simplicity, we assume that $X\subset \erre^n$ for a suitable $n\in\mathbb{N}$, and endow $X$ with the Euclidean distance 
\begin{equation}
    |x-y|:=\sqrt{\sum_{i=1}^n(x_i-y_i)^2}.
\end{equation}
Given $f:X\to [0,+\infty)$ and $g:X \times X \to [0,+\infty)$, we set
\[
    |f|_X = \sumX f_x \quad \text{and}\quad |g|_{X\times X}=\sumXY g_{x,y}.
\]
Moreover, we denote with $\MM(X)$ the set of nonnegative real functions on $X$ and with $\PP(X)$ the subset $\{\mu \in \MM(X):|\mu|_X=1\}$; analogously to continuous domains, we refer to these spaces as \emph{positive measures} and \emph{probability measures}, respectively.
Given two probability measures $\mu$ and $\nu$, we define the set of transportation plans between $\mu$ and $\nu$ as
\[
\Pi_{\mu,\nu}:=\bigg\{\pi_{x,y}\geq 0:\sumX \pi_{x,y}=\nu_y;\sumY \pi_{x,y}=\mu_x \bigg\}.
\]
We define the $p$-transportation cost functional $\mathbb{T}_p : \Pi_{\mu,\nu} \rightarrow [0, \infty)$ as $\mathbb{T}_p(\pi):=\sumXY |x-y|^p\pi_{x,y}$. 
The $p$-Wasserstein distance between $\mu$ and $\nu$ is then defined as
\begin{equation}
\label{def:KW}
	W_p^p(\mu,\nu):=\inf_{\pi \in \Pi_{\mu,\nu}}\mathbb{T}_p(\pi).
\end{equation}

Since we are considering a $n$-dimensional finite set of points, the infimum in \eqref{def:KW} is a minimum, which is characterized as the solution to the following Hitchcock-Koopmans transportation problem \cite{Flood1953}:
\begin{align}
\label{eq1:1}    W_p^p(\mu,\nu) := \min \quad & \sumXY |x-y|^p \pi_{x,y} \\
    \mbox{s.t.} \quad 
    & \sumX \pi_{x,y}=\nu_y, & \forally, \\
    & \sumY \pi_{x,y}=\mu_x, & \forallx, \\
\label{eq1:x}    & \pi_{x,y} \geq 0 & \forallxy.
\end{align}


\subsection{The Infinity Wasserstein distance}

Given a transportation plan $\pi\in\Pi(\mu,\nu)$, we define $\mathbb{T}_{\infty}(\pi)$ as it follows
\[
\mathbb{T}_{\infty}(\pi):=\max_{x,y\in X\times X, \; \text{s.t.}\; \pi_{x,y}>0}|x-y|,
\]
so that $\mathbb{T}_{\infty}(\pi)$ is the longest mass movement described by the transportation plan $\pi$.
We then define the infinity Wasserstein distance between $\mu$ and $\nu$ as 
\begin{equation}
    \label{eq:Winfinity}
    W_{\infty}(\mu,\nu):=\min_{\pi\in\Pi(\mu,\nu)}\mathbb{T}_{\infty}(\pi),
\end{equation}
hence $W_{\infty}(\mu,\nu)$ describes the \minimal longest transportation that can be accomplished with a transportation plan.
It is easy to check that $W_{\infty}(\mu,\nu)$ is equal to the limit of $W_p(\mu,\nu)$ when $p\to\infty$, \cite{Santambrogio2015}.
Notice that, unlike the Wasserstein distance, the infinity Wasserstein distance between two probability measures cannot be characterized as the minimum value of a minimum cost-flow problem.

In Theorem \ref{thm:alternativeWinfformulation} we will address the problem of computing the $W_\infty$ distance between any couple of probability measures.

\subsection{The Wasserstein Projection Problem}
\label{sec:prelim_projection}
It is well known that $W_p$ induces a metric over $\PP(X)$ for every $p\in[1,\infty]$, \cite{villani2009optimal}. 
\begin{definition}
Given a subset $K\subset \PP(X)$ and an element $\mu \in \PP(X)$, we say that $\nu \in \PP(X)$ is a projection of $\mu$ over $K$, with respect to $W_p$, if
\begin{equation}
    \label{def:projection_nu}
    \nu \in K\quad \text{and}\quad W_p(\mu,\nu) \leq W_p(\mu,\rho)\quad \forall \rho \in K.
\end{equation}
\end{definition}
In what follows, we focus on a specific class of sets $K$.
Given $f\in \MM(X)$ such that $|f|_X\geq 1$, we set
\begin{equation}
\label{eq:K_alpha}
    K_f:=\bigg\{\rho\in\PP(X) : \rho_x\leq f_x,\quad
    \forall x \in X\bigg\}.
\end{equation}
Given any $f\in\MM(X)$, the set $K_f$ is closed, which ensures the existence of at least one projection. With respect to the continuous setting, uniqueness is not to be expected in the discrete one, due to a lack of convexity. We recall that a standard geodesic between two measures $\mu_0$ and $\mu_1$ in the $p$-Wasserstein space on $\erre^n$ is given by
\[
	\mu_t = ((1-t)P^1+tP^2)_\sharp \gamma,\quad t\in [0,1]
\]  
where $P^i:\erre^n \times \erre^n \to \erre^n$ is the projection on the $i^{th}$ component, $\gamma$ is an optimal transport plan between $\mu_0$ and $\mu_1$, and the `$_\sharp$' symbol denotes the push-forward measure. The set $K_f$ is convex (in the classical sense, i.e. $t\mu +(1-t)\nu \in K_f$ for all $\mu,\nu\in K_f$ and $t\in [0,1]$) but, since we are considering the case in which measures are supported over a grid, $K_f$ is not geodesically convex with respect to any $W_p$ distance as long as the grid contains more than two points, since $(1-t)P^1+tP^2 \notin X$ for a generic $t\in(0,1)$ and therefore the geodesic is not defined.
As a consequence, there might be more than one measure satisfying conditions \eqref{def:projection_nu}, as the following example shows. Let 
\[
	X=\{0,1,2\}\subset \erre,\quad \mu =\delta_1, \quad f=\frac12(\delta_0+\delta_1+\delta_2).
\]
Then, for
\[
	\nu_t=\frac12(t\delta_0 +\delta_1+(1-t)\delta_2),\quad t\in [0,1]
\]	
 a direct computation shows that 
\[
	W_p(\mu,\nu_t)= \left( \int_X|x-1|^p\ d\nu_t(x)\right)^{\frac1p} = \left( \frac12(t +0+(1-t) )\right)^{\frac1p} =2^{-\frac{1}{p}}
\]
so that all measures $\nu_t$, for $t\in [0,1]$ are minimizers of $W_p(\mu,\cdot)$, for all $p\geq 1$.

In Theorem \ref{thm:projectionclassic} we will address the problem of computing a projection of a measure $\mu$ on $K_f$, with respect to the $W_1$ metric.
Noticeably, the argument used in Theorem \ref{thm:projectionclassic} can be extended to the projection problem induced by any $W_p$ with $p\in(1,\infty)$.
The case $p=\infty$ is handled in Corollary \ref{crll:lastcorollary}.

\section{The Maximum Nearby Flow Problem}

Given a threshold $t \ge 0$, we define the set of nearby points as
\[
\Nct:=\big\{(x,y)\in X\times X : |x-y|\le t \big\},
\]
moreover, given $x\in X$, we denote the radius $t$ ball centred in $x$ as
\[
    \Oct := \big\{ y \in X : |x-y|\le t \big\}.
\]
For every $t\in[0,\infty)$, a utility function over $\Nct$ is a collection of nonnegative values $s^{(t)}:=\{s^{(t)}_{x,y}\}_{(x,y)\in\Nct}$.
Given $\mu\in\PP(X)$, $\nu\in\MM(X)$, $t \ge 0$, and a utility function $s^{(t)}$, the Maximum Nearby Flow (MNF) problem, introduced in \cite{Auricchio2019}, is defined as
\begin{align}
    \label{nbf:0} \maxNFs&\bside(\eta),
\end{align}
where
\[
	\bside(\eta):=\sum_{(x,y)\in\Nct}s_{x,y}^{(t)}\eta_{x,y}
\]
and $\NFs$ is the set of nearby flows between $\mu$ and $\nu$, that is
\begin{equation}
\begin{split}
    \NFs:=\bigg\{\eta\in\MM(X \times X):\ &\eta_{x,y}=0\;\;\text{if}\;\; (x,y)\notin N_t,\\
    \sum_{x\in X} \eta_{x,y}\leq \nu_y\ &\forall y \in X,  \sum_{y \in X} \eta_{x,y}\leq \mu_x\ \forall x  \in X\bigg\}.
\end{split}    
\end{equation}
Note that, for all $\eta \in \NFs$, for all $\mu\in\PP(X)$ and all $\nu\in\MM(X)$,
\begin{equation}
\label{eq:leq1}
    |\eta|_{X \times X} = \sum_{x\in X}\sum_{y\in X}\eta_{x,y}\leq \min\Big\{\sum_{x\in X} \mu_x,\sum_y\nu_y\Big\} \leq
    \min\{1,|\nu|_Y\}.
\end{equation}

For the sake of simplicity and without loss of generality, from now on, we assume that $|\nu|_Y\ge 1$, so that $|\eta|_{X\times Y}\le 1$.
Since $\NFs$ is nonempty (it contains the null nearby flow,  $\eta_{x,y}=0$) and $\Nct$ is a finite set, the MNF problem is feasible and admits a solution.

%
%

%
In what follows, we denote by $\Theta^{(t)}_{\mu,\nu,s}$ the set of optimal solutions to the MNF problem with threshold $t \ge 0$, utility function $s^{(t)}$, probability measure $\mu\in\PP(X)$ and measure $\nu\in\MM(Y)$.
When the utility function is clear from the context, we omit $s$ from the index of $\Theta^{(t)}_{\mu,\nu,s}$.
Even though the Maximum Nearby Flow problem is well defined for any utility function $s^{(t)}$, in what follows we focus on two specific classes of utilities:
\begin{itemize}
    \item the \textit{constant utility} $s^{(t)}_{x,y}=1$ for every $t\ge 0$ and every couple $(x,y)\in\Nct$, which enables us to compute the infinity Wasserstein distance $W_\infty$ and a projection with respect to $W_\infty$. In this case, by \eqref{eq:leq1}
    \begin{equation}
    \label{eq:max1}
        \maxNFs \bside(\eta)= \maxNFs\sum_{(x,y)\in\Nct}s_{x,y}^{(t)}\eta_{x,y} = \maxNFs |\eta|_{N_t}\leq 1,
    \end{equation}
    
    and
    \item the \textit{complementary cost utility}, defined as $s_{x,y}^{(t)}=t-|x-y|$ for every $(x,y)\in\Nct$, which enables us to solve the Wasserstein Projection Problem. Notice that, since $|x-y|\le t$ for every $(x,y)\in\Nct$, then $s^{(t)}_{x,y}\ge 0$.
\end{itemize}

%
%
If a nearby flow $\eta\in\NFs$ satisfies $|\eta|_{X \times X} = 1$, 
%
the inequalities in \eqref{eq:leq1} are equalities and therefore $\eta$ is a transportation plan between $\mu$ and $\nu$, that is, $\eta \in \Pimn$.
We now turn to the problem of finding the smallest value $t$ for which there exists a solution $\bar \eta$ to the MNF such that $|\bar \eta|_{\Nct}=1$.
For this reason, given a utility function $s^{(t)}_{x,y}$, we introduce the \textit{\minimal saturation threshold}.

\begin{definition}[\Minimal Saturation Threshold]
\label{minimalsaturation}
Given a probability measure $\mu\in \PP(X)$, a measure $\nu \in \MM(X)$ such that $|\nu|_X\ge 1$, and a utility function $s^{(t)}_{x,y}$, we define the \emph{\minimal saturation threshold} as
\begin{equation}
\label{minimalsaturationformula}
    \tau_s(\mu,\nu):= \min \bigg\{ t \ge 0\text{ s.t. } \exists \eta \in \MNFs \text{ s.t. } |\eta|_{X \times X} =1 \bigg\},
\end{equation}
where $\MNFs$ is the set containing all the optimal nearby flows between $\mu$ and $\nu$ for threshold $t\ge 0$.
\end{definition}

Notice that, since $s^{(t)}_{x,y}\ge 0$ by definition, if we set $t=\max_{(x,y)\in X\times X}|x-y|$, then there exists at least one optimal nearby flow such that $|\bar \eta|_{X\times Y}=1$.
In particular, the right-hand side of \eqref{minimalsaturationformula} is the infimum of a non-empty set, thus $\tau_s(\mu,\nu)$ is well defined and finite.

\subsection{Characterizing the Infinity Wasserstein distance via the \Minimal Saturation Threshold}

We now use the \Minimal Saturation Threshold to characterize the infinity Wasserstein distance induced by the constant utility function, that is $s_{x,y}^{(t)}=1$ for every $t\ge 0$ and for every $(x,y)\in\Nct$.

\begin{theorem}
\label{thm:alternativeWinfformulation}
Let $\mu$ and $\nu$ be two probability measures on $X$ and let $s^{(t)}$ be the constant utility function, so that $s^{(t)}_{x,y}=1$ for every $t \ge 0$ and every $(x,y)\in\Nct$.
Then,
\[
    W_{\infty}(\mu,\nu)=\tau_s(\mu,\nu).
\]
\end{theorem}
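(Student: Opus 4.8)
The plan is to reduce the identity to the essentially tautological fact that the smallest threshold $t$ for which some transportation plan keeps all of its mass within displacement $t$ equals the minimal longest displacement $\mathbb{T}_{\infty}$ over all plans. The bridge is the remark recorded just before Definition \ref{minimalsaturation}: a mass-one nearby flow is automatically a transportation plan.

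First I would reformulate the saturation condition under the constant utility. Since $s^{(t)}_{x,y}=1$, the MNF objective is $\bside(\eta)=|\eta|_{N_t}$, which by \eqref{eq:max1} is bounded above by $1$. Hence an optimal flow $\eta\in\Theta^{(t)}_{\mu,\nu,s}$ has $|\eta|_{X\times X}=1$ if and only if the optimal value of the MNF problem at threshold $t$ equals $1$, which in turn holds if and only if there exists some $\eta\in\mathcal{N}^{(t)}_{\mu,\nu}$ with $|\eta|_{X\times X}=1$ (the value $1$ being maximal by \eqref{eq:leq1}). For such an $\eta$ all marginal inequalities in $\mathcal{N}^{(t)}_{\mu,\nu}$ are saturated, so $\eta\in\Pi_{\mu,\nu}$; and because $\eta$ is a nearby flow, $\eta_{x,y}=0$ whenever $|x-y|>t$, whence $\mathbb{T}_{\infty}(\eta)\le t$. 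Conversely, any $\pi\in\Pi_{\mu,\nu}$ with $\mathbb{T}_{\infty}(\pi)\le t$ is supported on $N_t$, has unit mass, and saturates the marginals, so it is an optimal nearby flow at threshold $t$. This yields the equivalence
\[
\exists\,\eta\in\Theta^{(t)}_{\mu,\nu,s}\text{ with }|\eta|_{X\times X}=1
\iff
\exists\,\pi\in\Pi_{\mu,\nu}\text{ with }\mathbb{T}_{\infty}(\pi)\le t,
\]
and therefore
\[
\tau_s(\mu,\nu)=\min\bigl\{t\ge 0 : \exists\,\pi\in\Pi_{\mu,\nu},\ \mathbb{T}_{\infty}(\pi)\le t\bigr\}.
\]

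With this reformulation the argument closes in one step. Because $X$ is finite, $\mathbb{T}_{\infty}$ takes only finitely many values on $\Pi_{\mu,\nu}$ and so attains its minimum $W_{\infty}(\mu,\nu)$; let $\pi^\ast$ be a minimizer. The set on the right-hand side above is upward closed, it contains $t=\mathbb{T}_{\infty}(\pi^\ast)=W_{\infty}(\mu,\nu)$, and it contains no $t<W_{\infty}(\mu,\nu)$ since every plan satisfies $\mathbb{T}_{\infty}(\pi)\ge W_{\infty}(\mu,\nu)$. Hence it equals $[\,W_{\infty}(\mu,\nu),\infty)$, and its minimum is $W_{\infty}(\mu,\nu)=\tau_s(\mu,\nu)$.

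I expect the substantive content to lie entirely in the equivalence of the first step, in particular the ``conversely'' direction and the point that optimality of a mass-one flow is forced by the bound $|\eta|_{N_t}\le 1$; once the two feasible sets are matched, the remainder is the identification of two minima and is routine. The only bookkeeping I would keep honest is that the infimum defining $\tau_s$ is genuinely attained, which is exactly why I lean on the finiteness of $X$: since $\mathbb{T}_{\infty}$ ranges over a finite set of displacement values, both minima are realized rather than merely approached.
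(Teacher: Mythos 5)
Your proposal is correct and follows essentially the same route as the paper's proof: both directions rest on the same two observations, namely that a mass-one nearby flow saturates the marginal constraints and is therefore a transportation plan supported in $N_t$ (giving $W_\infty\le\tau_s$), and that an optimal plan for $W_\infty$ is a feasible nearby flow of mass one, hence optimal by the bound $|\eta|_{N_t}\le 1$ (giving $\tau_s\le W_\infty$). Your packaging as an equivalence of feasible sets followed by an identification of minima, together with the explicit check that the minimum defining $\tau_s$ is attained, is a slightly more careful write-up of the same argument.
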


\begin{proof}
Let $t \ge 0$ be such that there exists $\eta \in \MNFs$ for which $|\eta|_{X\times X}=1$. 
Then, $\eta \in\Pimn$, and since supp$(\eta)\subseteq N_t$,  $\mathbb{T}_\infty(\eta)\leq t$.
Therefore, 
\[
    W_{\infty}(\mu,\nu)\leq \min \left\{ t \ge 0\text{ s.t. } \exists \eta \in \MNFs \text{ s.t. } |\eta|_{X \times X} =1 \right\}=\tau_s(\mu,\nu).
\]
In order to conclude we need to prove the inverse inequality.
Let $t=W_{\infty}(\mu,\nu)$,
then there exists $\pi \in \Pi(\mu,\nu)$ such that
\[
    \mathbb{T}_\infty(\pi)=t,\qquad \text{i.e.,}\qquad \pi_{x,y}=0 \qquad\text{if}\qquad |x-y|>t.
\]
Therefore, $\pi\in \NFs$ and $|\pi|_{N_t}=1$. By \eqref{eq:max1}, $\pi\in\MNFs$, and therefore
\[
    \min \Big\{ t \ge 0 \;\text{ s.t. } \; \exists \eta \in \MNFs \; \text{ s.t. } \; |\eta|_{\Nct}=1 \Big\}\leq \mathbb{T}_\infty(\pi)=W_{\infty}(\mu,\nu),
\]
which concludes the proof.
\end{proof}

\subsection{Solving the Projection Problem via the Maximum Nearby Flow problem}

In this section we show that the \minimal saturation threshold associated with the complementary cost utility, defined as $s^{(t)}_{x,y}=t-|x-y|$ for $t\ge 0$ and $(x,y)\in\Nct$, is connected to the computation of the Wasserstein Projection.
Given a threshold $t\ge 0$, we denote with $|\cdot|_t$ the $t$-truncated Euclidean norm, defined as
\[
    |x-y|_t:=\min\{|x-y|,t\}.
\]
We denote with $W_{p}^{(t)}$ the Wasserstein distance obtained by using the $p$-th power of the $t$-truncated Euclidean distance $|\cdot|_t$ rather than the standard Euclidean distance in \eqref{eq1:1}, that is
\[
    W_{p}^{(t)}(\mu,\nu):= \left(\minpi \sumXY |x-y|^p_t\pi_{x,y}\right)^{\frac1p}.
\]
Since $|x-y|_t\leq |x-y|$ for all $x,y\in X$, then
\[
    \sumXY |x-y|^p_t\pi_{x,y} \leq \sumXY |x-y|^p\pi_{x,y}\qquad \forall \pi\in \Pi_{\mu,\nu},\ \forall \mu,\nu\in \PP(X)
\]
and thus
\begin{equation}
\label{eq:w1trunc}
    W_p^{(t)}(\mu,\nu)\leq W_p(\mu,\nu)\qquad \forall \mu,\nu\in \PP(X). 
\end{equation}

\begin{remark}
\label{rmk:equivalence}
    It was shown in \cite{Auricchio2019} that if $\mu$ and $\nu$ are two probability measures and $s_{x,y}^{(t)}=t-|x-y|$ is the complementary cost utility, then 
    \begin{equation}
	\label{eq:thtrunceq}
		W_{1}^{(t)}(\mu,\nu)  = t-\maxNFs\sum_{(x,y)\in\Nct}s_{x,y}^{(t)}\eta_{x,y},
	\end{equation}
    and that a similar result applies to every $W_p^{(t)}$ distance.
\end{remark}

As a preliminary result, we show an \textit{a-posteriori} upper bound on the error we make by approximating $W_1$ with $W_{1}^{(t)}$.

\begin{theorem}
\label{thm:exitimationabserr}
Let be given a threshold $t \ge 0$, $\mu,\nu \in \PP(X)$, and the complementary cost utility function $s^{(t)}_{x,y}=t-|x-y|$. 
\begin{itemize}
    \item[(i)] If there exists $\eta^* \in \MNFs$ such that $|\eta^*|_{N_t}=1$, then
        \begin{equation}
        \label{eq:w1equal}
            W_1^{(t)}(\mu,\nu)= W_1(\mu,\nu).
        \end{equation}
    \item[(ii)] For all $\eta\in \MNFs$ such that $|\eta|_{N_t}<1$
        \begin{align}
        \label{eq:fin:abserr}
            \nonumber W_1\mn&-W_1^{(t)}\mn \\
            \nonumber &\leq \sum_{(x,y)\in X\times Y} (|x-y|-t)_+ \frac{ \Big(\mu_x 
            - \sumOct \eta_{x,y} \Big) \Big(\nu_y - \sumIct \eta_{x,y} \Big)} 
            {1-\sumXY\eta_{x,y}},
        \end{align}
        where $(\;\cdot\;)_+:=\max\{\cdot,0\}$ is the positive part of $x\in\erre$.
\end{itemize} 
\end{theorem}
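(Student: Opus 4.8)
The plan is to handle the two parts separately, using Remark~\ref{rmk:equivalence} as the bridge between the value of the Maximum Nearby Flow problem and $W_1^{(t)}$.

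For part (i), the starting point is that any optimal nearby flow $\eta^*$ with $|\eta^*|_{\Nct}=1$ saturates the inequalities in \eqref{eq:leq1}, hence $\eta^*\in\Pimn$, and being supported on $\Nct$ it satisfies $|x-y|=|x-y|_t$ throughout its support. Invoking Remark~\ref{rmk:equivalence} together with the optimality of $\eta^*$, I would compute
\[
    W_1^{(t)}\mn = t-\sum_{(x,y)\in\Nct}(t-|x-y|)\eta^*_{x,y} = \sumXY |x-y|\,\eta^*_{x,y},
\]
which is exactly the $W_1$-transport cost of the admissible plan $\eta^*$. This gives $W_1\mn\le W_1^{(t)}\mn$, and combined with the reverse inequality \eqref{eq:w1trunc} it forces equality.

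For part (ii), the key idea is to repair a non-saturating optimal nearby flow $\eta$ into a genuine transport plan by filling the unserved masses with an independent (product) coupling. Writing $m:=\sumXY\eta_{x,y}<1$, the residual supply $r_x:=\mu_x-\sumOct\eta_{x,y}$ and residual demand $c_y:=\nu_y-\sumIct\eta_{x,y}$ are nonnegative and, since $\mu,\nu\in\PP(X)$, each sums to $1-m$. I would then define
\[
    \pi_{x,y}:=\eta_{x,y}+\frac{r_x\,c_y}{1-m}
\]
and check that both marginal constraints hold, so that $\pi\in\Pimn$ is admissible for $W_1$.

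Bounding $W_1\mn\le\mathbb{T}_1(\pi)=\sumXY|x-y|\,\eta_{x,y}+\sumXY|x-y|\,\frac{r_x c_y}{1-m}$, I would once more use Remark~\ref{rmk:equivalence} and the optimality of $\eta$ to rewrite $\sumXY|x-y|\,\eta_{x,y}=W_1^{(t)}\mn-t(1-m)$. Since $\sumXY\frac{r_x c_y}{1-m}=1-m$, the correction $-t(1-m)$ recombines with the product term into a single sum, yielding
\[
    W_1\mn-W_1^{(t)}\mn\le\sumXY(|x-y|-t)\,\frac{r_x c_y}{1-m},
\]
and the stated estimate follows by replacing $(|x-y|-t)$ with its positive part $(|x-y|-t)_+$, which only enlarges the right-hand side since the weights $r_x c_y/(1-m)$ are nonnegative. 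The main obstacle, such as it is, lies in spotting the correct repair: the normalization by $1-m$ is precisely what renders the product coupling marginally consistent, and recognizing that the $-t(1-m)$ term fuses cleanly with the product sum against $(|x-y|-t)$ is the one non-mechanical algebraic step; the marginal verifications and the positive-part bound are routine.
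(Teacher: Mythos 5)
Your proof is correct and follows the same overall strategy as the paper's: part (i) is essentially identical (complete the saturating flow into a plan, expand $|x-y| = (|x-y|-t)+t$, and invoke Remark~\ref{rmk:equivalence} together with \eqref{eq:w1trunc}), and in part (ii) you build the very same repaired plan $\pi_{x,y}=\eta_{x,y}+r_xc_y/(1-m)$ from the product of the residual marginals. The one genuine difference is in how the final estimate is extracted. The paper first proves, by a perturbation/contradiction argument exploiting the optimality of $\eta$, that the residuals $r_{\bar x}$ and $c_{\bar y}$ cannot both be positive at a pair with $|\bar x-\bar y|<t$; this forces the correction term $\zeta$ to be supported where $|x-y|\ge t$, so that $\min\{|x-y|,t\}\zeta_{x,y}=t\,\zeta_{x,y}$ in the splitting $|x-y|=(|x-y|-t)_++\min\{|x-y|,t\}$. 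You bypass that lemma entirely: you use optimality only through the identity $\sum_{N_t}|x-y|\eta_{x,y}=W_1^{(t)}\mn-t(1-m)$, arrive at the bound with the signed factor $(|x-y|-t)$, and then pass to the positive part, which can only increase the nonnegatively weighted sum. This is a mild but real simplification — it trades the paper's structural information about where $\zeta$ lives (which makes the paper's bound an equality up to the choice of $\pi$ on the sphere $|x-y|=t$) for a shorter, purely algebraic derivation of the same stated inequality. Both arguments are valid; yours is slightly more economical, the paper's slightly more informative about the support of the correction.
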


\begin{proof}
If $\eta^*\in \MNFs$ satisfies $|\eta^*|_{N_t}=1$,
then $\eta^*$ is a transportation plan between $\mu$ and $\nu$. Since by definition $\eta^*=0$ if $|x-y|>t$, using \eqref{eq:thtrunceq} in the last equality, we obtain
\begin{align*}
    W_1(\mu,\nu) &\le \sumXY |x-y|\eta^*_{x,y}=\sum_{(x,y)\in N_t} |x-y|\eta^*_{x,y} \\
        & = \sum_{(x,y)\in N_t} (|x-y|-t +t)\eta^*_{x,y}\\
        &= t-\maxNFs\sum_{(x,y)\in\Nct}s_{x,y}^{(t)}\eta_{x,y}= W_1^{(t)}(\mu,\nu),
\end{align*}
which, together with \eqref{eq:w1trunc} concludes the proof of (i). 

Let now $\eta\in \MNFs$ satisfy $|\eta|_{N_t}<1$ and let $(\bar x,\bar y)\in\Nct$ satisfy $|\bar x-\bar y|<t$. 
We now show that if  $\sum_{x\in B_t(\bar y)}\eta_{x,\bar y} < \nu_{\bar y}$, then $\sum_{y\in B_t(\bar x)}\eta_{\bar x,y}= \mu_{\bar x}$ (and, in the same way, if $\sum_{y\in B_t(\bar x)}\eta_{\bar x,y}< \mu_{\bar x}$, then $\sum_{x\in B_t(\bar y)}\eta_{x,\bar y} = \nu_{\bar y}$).
Toward a contradiction, assume that
\begin{equation}
\label{cor:1}
\quad \sum_{x\in B_t(\bar y)}\eta_{x,\bar y} < \nu_{\bar y}
    \quad \mbox{and} \quad 
\sum_{y\in B_t(\bar x)}\eta_{\bar x,y}< \mu_{\bar x}.
\end{equation}
%
%
Then, there exists a $\varepsilon>0$ such that
\[
    \sum_{x\in B_t(\bar y)}\eta_{x,\bar y}+\varepsilon < \nu_{\bar y} \quad\quad \text{and} \quad\quad  \sum_{y\in B_t(\bar x)}\eta_{\bar x,y}+\varepsilon < \mu_{\bar x}
\]
and define $\eta^*$ as
\[
\eta^*_{x,y} := \left\{ 
\begin{array}{ll}
     \eta_{x,y}+\varepsilon & \quad  \mbox{ if } (x,y)=(\bar x,\bar y), \\
     \eta_{x,y} & \quad \; \mbox{otherwise}.
\end{array}
\right.
\]
It is easy to see that $\eta^*\in\NFs$, moreover, since $s^{(t)}_{\bar x,\bar y}=t-|\bar x-\bar y|>0$, we have that
\begin{eqnarray*}
    \sumNct s^{(t)}_{x,y}\eta^*_{x,y}=\sumNct s^{(t)}_{x,y}\eta_{x,y} +\varepsilon s^{(t)}_{\bar x,\bar y}>\sumNct s^{(t)}_{x,y}\eta_{x,y},
\end{eqnarray*}
which contradicts the maximality of $\eta$. We conclude that \eqref{cor:1} holds.
Let $\zeta\in\MM(X\times X)$ be defined as
\[
\zeta_{x,y}=\frac{ \Big(\mu_x 
- \sum_{v \in B_t(x)} \eta_{x,v} \Big) \Big(\nu_y - \sum_{u\in B_t(y)} \eta_{u,y} \Big)} 
{1-|\eta|_{N_t}}.
\]
Owing to \eqref{cor:1}, we have that $\zeta_{x,y}=0$ if $|x-y|<t$, while $\eta_{x,y}=0$ if $|x-y|>t$. When $|x-y|=t$ both measures can be non zero.
Define
\begin{equation}
\label{reconstructionoftheplane}
    \pi\in\MM(X\times X),\qquad \pi_{x,y} := \eta_{x,y} + \zeta_{x,y}.
\end{equation}
For $y\in X$ we compute
\begin{eqnarray*}
    \sum_{x\in X}\pi_{x,y}
    &=&  \sumIct \eta_{x,y} + \sum_{x \in X}\zeta_{x,y}\\
    &=&  \sumIct \eta_{x,y} + \frac{  \left(\nu_y - \sum_{u\in B_t(y)} \eta_{u,y} \right)} 
{1-|\eta|_{N_t}}\sum_{x \in X}\left(\mu_x 
- \sum_{v \in B_t(x)} \eta_{x,v} \right)\\
    &=& \sumIct \eta_{x,y} + \nu_y - \sum_{u\in B_t(y)} \eta_{u,y}=\nu_y.
\end{eqnarray*}
Similarly, for all $x\in X$, $\sum_{y\in X} \pi_{x,y} = \mu_x$, thus $\pi\in\Pimn$.
Lastly, let $\pi^*_{x,y}$ be an optimal transportation plan between $\mu$ and $\nu$ with respect to the Euclidean metric and let $\pi$ be the transportation plan defined in \eqref{reconstructionoftheplane}. We have
 \begin{align*}
    \sumXY &|x-y|\pi^*_{x,y}\leq \sumXY |x-y|\pi_{x,y}\\
    &= \sumXY\bigg(\max\{|x-y|-t, 0\} + \min \{|x-y|, t\} \bigg) \pi_{x,y}\\
 	&= 	\sumXY (|x-y|-t)_+ \zeta_{x,y} + 
 	\sumXY \min \{|x-y|, t\}\pi_{x,y}\\
    &=\sumXY (|x-y|-t)_+\zeta_{x,y} + 
	t \sumXY \zeta_{x,y} + \sumNct |x-y| \eta_{x,y}\\
    &=\sumXY(|x-y|-t)_+\zeta_{x,y} + 
  	t \bigg( 1 - \sumNct \eta_{x,y} \bigg)+ \sumNct |x-y| \eta_{x,y}\\
    &=\sumXY(|x-y|-t)_+\zeta_{x,y} +  
    t -\sumNct s_{x,y}^{(t)} \eta_{x,y},
\end{align*}
so that
\[
  W_1(\mu,\nu)\leq \sumXY(|x-y|-t)_+\zeta_{x,y} +  
    t -\sumNct s^{(t)}_{x,y} \eta_{x,y}.
\]
Since $\eta \in \MNFs$, from Remark \ref{rmk:equivalence}, we infer
\[
	W_1(\mu,\nu)\leq\sumXY(|x-y|-t)_+\zeta_{x,y}+W_1^{(t)}(\mu,\nu),
\]
which allows us to conclude the proof.
%
%
%
\end{proof}

\begin{remark}
The upper bound proposed in Theorem \ref{thm:exitimationabserr} can be generalized to an upper bound on the relative error.
Indeed, since $W_1^{(t)}(\mu,\nu)\le W_1(\mu,\nu)$, given any $\mu,\nu\in \PP(X)$, $\eta\in\MNFs$, and $t \ge 0$, we have
\begin{equation}
\label{eq:fin:error_rel_tron}
    \dfrac{|W_1\mn-W_1^{(t)}\mn|}{|W_1\mn|}\leq\dfrac{\sumXY(|x-y|-t)_+\tilde{\mu}_x\tilde{\nu}_y}{\Big(1-\sumNct \eta_{x,y}\Big)W_1^{(t)}\mn}
\end{equation}
where $\tilde{\mu}_x=\mu_x-\sumOct\eta_{x,y}$ and $\tilde{\nu}_y=\nu_y-\sumIct\eta_{x,y}$.
\end{remark}


%

%

%
We are now ready to state the main result of this section, which ties the \minimal saturation threshold, defined in \eqref{minimalsaturationformula}, to the Wasserstein Projection problem defined in Section \ref{sec:prelim_projection}.

\begin{theorem}
\label{thm:projectionclassic}
    Let $t \ge 0$ be a threshold, $\mu\in \PP(X)$, and $f\in\MM(X)$ with $|f|_X\geq 1$. 
    Given the complementary cost utility function $s^{(t)}_{x,y}=t-|x-y|$, we consider the following problem:
    \begin{align}
    \label{eq:projthm}
        \max_{\eta\in\MM(X\times X)}&\sumNct s^{(t)}_{x,y}\eta_{x,y}\\
        \nonumber    \text{subj.} \quad &\sumOct \eta_{x,y}\le \mu_x \quad \text{and} \quad \sumIct \eta_{x,y}\le f_y \quad \forall x,y\in X.
    \end{align}
    If $\eta\in\MM(X\times X)$ is a solution to \eqref{eq:projthm} such that $|\eta|_{X\times X}=1$, then the second marginal of $\eta$
    \begin{equation}
        \label{eq:zeta_projdef}
        \zeta_y=\sumIct\eta_{x,y},\quad\quad\quad \forall y \in X
    \end{equation}
    is a projection of $\mu$ on $K_f$, with respect to $W_1$. 
    That is, 
    \[
    	\zeta\in K_f=\left\{\rho\in\PP(X) : \rho_x\leq f_x,\quad
    \forall x \in X\right\}
    \] 
    and
    \begin{equation}
    \label{eq:zeta_proj}
        W_1(\mu,\zeta) \leq W_1(\mu,\rho) \quad\quad \quad \forall \rho \in K_f.
    \end{equation}
\end{theorem}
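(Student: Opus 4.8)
The plan is to combine the identity relating the complementary-cost MNF value to the truncated distance $W_1^{(t)}$ (Remark \ref{rmk:equivalence}) with the monotonicity $W_1^{(t)}\le W_1$ from \eqref{eq:w1trunc}. First I would record that \eqref{eq:projthm} is nothing but the MNF problem for the pair $(\mu,f)$: mass placed on pairs outside $\Nct$ contributes nothing to the objective and is unconstrained, so we may assume $\mathrm{supp}(\eta)\subseteq\Nct$, whence $|\eta|_{\Nct}=|\eta|_{X\times X}=1$ and $\eta\in\mathcal{N}^{(t)}_{\mu,f}$. Let $V^*:=\sum_{(x,y)\in\Nct}s^{(t)}_{x,y}\eta_{x,y}$ denote the optimal value, so $V^*=\max_{\eta'\in\mathcal{N}^{(t)}_{\mu,f}}\sum_{(x,y)\in\Nct}s^{(t)}_{x,y}\eta'_{x,y}$.

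Next I would verify $\zeta\in K_f$. Since $\mu\in\PP(X)$, summing the first constraint $\sum_{y\in\Oct}\eta_{x,y}\le\mu_x$ over $x\in X$ gives $|\eta|_{\Nct}\le 1$ as in \eqref{eq:leq1}; the hypothesis $|\eta|_{\Nct}=1$ then forces each such inequality to be an equality, so $\eta$ has first marginal $\mu$. Its second marginal is $\zeta$ by \eqref{eq:zeta_projdef}, hence $\eta\in\Pi_{\mu,\zeta}$. Moreover $|\zeta|_X=|\eta|_{X\times X}=1$ with $\zeta_y\ge 0$, so $\zeta\in\PP(X)$, and $\zeta_y\le f_y$ by the second constraint, so $\zeta\in K_f$.

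I would then bound $W_1(\mu,\zeta)$ from above by the cost of the admissible plan $\eta$. Because $\mathrm{supp}(\eta)\subseteq\Nct$, where $|x-y|=t-s^{(t)}_{x,y}$, the cost telescopes to $\sum_{(x,y)}|x-y|\eta_{x,y}=t\,|\eta|_{\Nct}-\sum_{(x,y)\in\Nct}s^{(t)}_{x,y}\eta_{x,y}=t-V^*$, using $|\eta|_{\Nct}=1$. Hence $W_1(\mu,\zeta)\le t-V^*$. Finally, for an arbitrary competitor $\rho\in K_f$, the pointwise bound $\rho\le f$ shrinks the feasible region, $\mathcal{N}^{(t)}_{\mu,\rho}\subseteq\mathcal{N}^{(t)}_{\mu,f}$, so the MNF value for $(\mu,\rho)$ is at most $V^*$. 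As $\rho$ is a genuine probability measure, Remark \ref{rmk:equivalence} applies to $(\mu,\rho)$ and gives $W_1^{(t)}(\mu,\rho)=t-\max_{\eta'\in\mathcal{N}^{(t)}_{\mu,\rho}}\sum_{(x,y)\in\Nct}s^{(t)}_{x,y}\eta'_{x,y}\ge t-V^*$; combining with \eqref{eq:w1trunc} yields $W_1(\mu,\rho)\ge W_1^{(t)}(\mu,\rho)\ge t-V^*\ge W_1(\mu,\zeta)$, which is exactly \eqref{eq:zeta_proj}.

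I expect the main obstacle to be the correct handling of the asymmetry between $f$ and the competitors $\rho$: the object $f$ enters only as a capacity in \eqref{eq:projthm} and need not be a probability measure, whereas Remark \ref{rmk:equivalence} is available only for probability-measure pairs. The chain goes through precisely because every $\rho$ lies in $K_f\subseteq\PP(X)$, so one may simultaneously invoke the feasible-region monotonicity $\mathcal{N}^{(t)}_{\mu,\rho}\subseteq\mathcal{N}^{(t)}_{\mu,f}$ (delivering the bound $t-V^*$) and the truncated-distance identity for $(\mu,\rho)$. Keeping the directions of these inequalities straight, and checking that $\eta$ genuinely realizes the value $t-V^*$ (so that the two bounds meet), is the delicate bookkeeping.
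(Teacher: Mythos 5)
Your proof is correct and follows essentially the same route as the paper's: both rest on the identity of Remark \ref{rmk:equivalence} together with the comparison $W_1^{(t)}\le W_1$, and on the observation that any competitor $\rho\in K_f$ yields a feasible point for \eqref{eq:projthm}, so its MNF value cannot exceed the optimum. The only difference is organizational: you argue directly via the two-sided bound $t-V^*$ (re-deriving Theorem \ref{thm:exitimationabserr}(i) inline), whereas the paper cites that theorem and closes with a contradiction.
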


\begin{proof}
    Let $\eta$ be a solution to \eqref{eq:projthm} such that $|\eta|_{X\times X}=1$ 
    then $\eta\in\Pi_{\mu,\zeta}$, where $\zeta$ is defined in \eqref{eq:zeta_projdef}.
    Note that $\zeta\in\PP(X)$, since $\zeta_y\geq 0$ and 
    \[
 	   \sum_{y\in X}\zeta_{y}=\sum_{y\in X}\sum_{x\in X}\eta_{x,y}=\sum_{x\in X}\mu_x=1.
    \]
    Moreover,  $\zeta\in K_f$ by definition of $\zeta$ and the fact that $\eta$ satisfies the constraints in \eqref{eq:projthm}.
    %
    %
   Owing to Theorem \ref{thm:exitimationabserr}, we obtain
    \begin{equation}
    \label{eq::muzeta}
        W_1^{(t)}(\mu,\zeta)=W_1(\mu,\zeta).
    \end{equation}
    Lastly, we show that \eqref{eq:zeta_proj} holds.
    Toward a contradiction, let us assume that there exists $\rho\in K_f$ such that $W_1(\mu,\rho)<W_1(\mu,\zeta)$, then 
    %
    \[
        W_1^{(t)}(\mu,\rho)\leq W_1(\mu,\rho) < W_1(\mu,\zeta),
    \]    
    where the first inequality comes from the inequality $|x-y|_t\le |x-y|$.
    Moreover, owing to \eqref{eq::muzeta}
    \[
        W_1^{(t)}(\mu,\rho) \le W_1(\mu,\rho) < W_1(\mu,\zeta) = W_1^{(t)}(\mu,\zeta),
    \]    
    thus, from Remark \ref{rmk:equivalence}, we find a nearby flow $\eta^*\in\mathcal{N}^{(t)}_{\mu,\rho}$ such that
    \[
    W_1^{(t)}(\mu,\rho)=t-\sumNct s^{(t)}_{x,y}\eta^*_{x,y}<t-\sumNct s^{(t)}_{x,y}\eta_{x,y}
    \]
	and therefore
    \[
    		\sumNct s^{(t)}_{x,y}\eta_{x,y}<\sumNct s^{(t)}_{x,y}\eta^*_{x,y}.
    \]
    Since $\rho\in K_f$, we conclude that $\eta^*$ is a feasible solution for problem \eqref{eq:projthm}, which contradicts the optimality of $\eta$.
\end{proof}

\begin{remark}
    %
    Notice that all the arguments used so far can be adapted to the case in which $|x-y|$ is replaced with any positive and bounded cost function $c_{x,y}$ in the utility function $s^{(t)}_{x,y}$ and $|x-y|_t$ with $(c_{x,y})_t:=\min\{c_{x,y},t\}$.
    In this case, we are able to compute the $W_{c,\infty}$ distance, defined as
    \[
        W_{c,\infty}(\mu,\nu)=\min_{\gamma\in\Pi(\mu,\nu)}\max_{\gamma_{x,y}>0}c_{x,y}.
    \]
    Similarly, we can compute the solution to the minimization problem
    \[
        \argmin_{\nu\in K_f}W_c(\mu,\nu)=\argmin_{\nu\in K_f}\min_{\gamma\in\Pi(\mu,\nu)}c_{x,y}\gamma_{x,y}.
    \]
    %
    
    %
    In particular, Theorem \ref{thm:projectionclassic} can be generalised to compute the infinity Wasserstein distance induced by any $l_p$ norm and the a projection of a probability measure on $K_f$ with respect to any $p$-Wasserstein distance $W_p$, where $p\in[1,\infty)$.
\end{remark}

To conclude the section, we adapt Theorem \ref{thm:projectionclassic} to the case $p=\infty$.
%

\begin{corollary}
\label{crll:lastcorollary}
    Let  $\mu\in \PP(X)$ be a probability measure and $f\in\MM(X)$ be a measure such that $|f|_X\geq 1$.
    Let $\tau_s(\mu,f)\in\erre$ be the \minimal saturation threshold between $\mu$ and $f$ with respect to the constant utility function $s^{(t)}_{x,y}=1$.
    %
    Then,
    \begin{equation}
        \label{eq:winfprojcor}
        \tau_s(\mu,f)=\min_{\nu\in K_f}W_{\infty}(\mu,\nu).
    \end{equation}
    Moreover, let $\eta$ be a solution to 
    \begin{align}
    \label{eq:projthminfinity}
        \max_{\eta\in\MM(X\times X)}&\sum_{(x,y)\in N_T} \eta_{x,y}\\
        \nonumber\text{subj.} \quad&\sum_{y\in B_T(x)} \eta_{x,y}\le \mu_x \quad \text{and} \quad \sum_{x\in B_T(y)} \eta_{x,y}\le f_y \quad \forall x,y\in X,
    \end{align}
    where $T=\tau_s(\mu,f)$.
    Then $\zeta_y=\sum_{x\in B_T(y)}\eta_{x,y}$ is a projection of $\mu$ on $K_f$, with respect to $W_{\infty}$. 
\end{corollary}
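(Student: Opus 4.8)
The plan is to prove the two assertions of the corollary in turn, in both cases reducing to the constant-utility machinery already developed. For the identity \eqref{eq:winfprojcor} I would establish the two inequalities separately. For the direction ``$\min_{\nu\in K_f}W_\infty(\mu,\nu)\le\tau_s(\mu,f)$'', I start from $T:=\tau_s(\mu,f)$: by Definition \ref{minimalsaturation} there is an optimal nearby flow $\eta$ at threshold $T$ with $|\eta|_{X\times X}=1$. Since $|\mu|_X=1$ and $\sum_{y\in B_T(x)}\eta_{x,y}\le\mu_x$ for every $x$, saturating the total mass forces $\sum_{y}\eta_{x,y}=\mu_x$ for all $x$; hence $\eta$ is a transportation plan from $\mu$ to its second marginal $\zeta_y:=\sum_{x}\eta_{x,y}$, which lies in $K_f$ because $\zeta_y\le f_y$ and $|\zeta|_X=1$. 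As $\mathrm{supp}(\eta)\subseteq N_T$ gives $\mathbb{T}_\infty(\eta)\le T$, we obtain $\min_{\nu\in K_f}W_\infty(\mu,\nu)\le W_\infty(\mu,\zeta)\le T$.

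For the reverse inequality I pick a minimizer $\nu^*\in K_f$ of $W_\infty(\mu,\cdot)$ and set $t^*=W_\infty(\mu,\nu^*)$. Since $\nu^*$ is itself a probability measure, Theorem \ref{thm:alternativeWinfformulation} applies and yields an optimal nearby flow $\eta^*$ between $\mu$ and $\nu^*$ at threshold $t^*$ with $|\eta^*|_{X\times X}=1$. The crucial monotonicity observation is that $\nu^*\in K_f$ means $\nu^*_y\le f_y$, so each column constraint $\sum_x\eta^*_{x,y}\le\nu^*_y$ implies $\sum_x\eta^*_{x,y}\le f_y$; thus $\eta^*$ is also a feasible nearby flow between $\mu$ and $f$ of mass $1$ at threshold $t^*$, and it is optimal there since by \eqref{eq:max1} the constant-utility objective cannot exceed $1$. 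Hence $\tau_s(\mu,f)\le t^*$, and combining the two inequalities proves \eqref{eq:winfprojcor}.

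For the second assertion I would first argue that any solution $\eta$ to \eqref{eq:projthminfinity} automatically satisfies $|\eta|_{X\times X}=1$: at $t=T$ the constant-utility objective is bounded above by $1$ through \eqref{eq:max1}, while the definition of $\tau_s$ guarantees that the value $1$ is attained, so the optimum equals $1$ and every maximizer saturates it. From here the argument mirrors the first step above: mass saturation forces $\eta$ to be a transport plan from $\mu$ onto $\zeta_y=\sum_{x\in B_T(y)}\eta_{x,y}\in K_f$, and $\mathrm{supp}(\eta)\subseteq N_T$ gives $W_\infty(\mu,\zeta)\le T$. Finally \eqref{eq:winfprojcor} yields $T=\min_{\nu\in K_f}W_\infty(\mu,\nu)\le W_\infty(\mu,\zeta)$, so $W_\infty(\mu,\zeta)=T$ is the minimal value and $\zeta$ is a projection.

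The individual steps are short, and I expect the main (though mild) obstacle to be the handling of the fact that $f$ is a generic measure rather than a probability measure, which is precisely what makes the relaxation from the column capacity $\nu_y$ to $f_y$ legitimate. The single load-bearing idea is the monotonicity of the nearby-flow feasible set under enlarging the column bounds: one must confirm that replacing the target probability measure $\nu^*$ by the larger $f$ neither destroys feasibility nor the mass-one property, and conversely that the second marginal of any mass-one nearby flow bounded by $f$ is an admissible competitor in $K_f$ realizing the threshold.
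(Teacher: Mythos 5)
Your proposal is correct and follows essentially the same route as the paper: both directions of \eqref{eq:winfprojcor} are obtained by the same two observations (a mass-one nearby flow bounded by $f$ is a transport plan onto a marginal lying in $K_f$ with support in $N_T$, and conversely an optimal plan realizing $W_\infty(\mu,\nu^*)$ for $\nu^*\in K_f$ remains feasible when the column bound $\nu^*_y$ is relaxed to $f_y$), and the projection claim follows by combining the first step with the identity. The only cosmetic differences are that you argue the second inequality directly via Theorem \ref{thm:alternativeWinfformulation} where the paper argues by contradiction from the optimal plan, and that you spell out why every maximizer of \eqref{eq:projthminfinity} attains mass one, a point the paper leaves implicit.
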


\begin{proof}
First, we show that $\min_{\nu\in K_f}W_\infty(\mu,\nu)\le \tau_s(\mu,f)$.
Let $t$ be such that there exists $\eta\in\MM(X\times X)$ with $|\eta|_{\Nct}=1$,
\[
    \sumOct \eta_{x,y}\le \mu_x,\quad\quad\text{and}\quad\quad \sumIct \eta_{x,y}\le f_y.
\]
Then $\eta$ is a transportation plan between $\mu$ and $\zeta$, where 
\[
    \zeta_y=\sumOct\eta_{x,y}.
\]
Then $W_{\infty}(\mu,\zeta)\le t$ and therefore $W_\infty(\mu,\zeta)\le\tau_s(\mu,f)$.
We now prove the other inequality.
Let $\zeta\in K_f$ be a projection of $\mu$ on $K_f$ with respect to $W_\infty$.
Let $T:=W_\infty(\mu,\zeta)$ and, towards a contradiction, let us assume that $T<\tau_s(\mu,f)$. Then there exists $\pi\in\Pi_{\mu,\zeta}$ such that $\pi_{x,y}=0$ if $|x-y|>T$. 
Let $\eta^*_{x,y}=\pi_{x,y}$ for every $(x,y)\in N_T$,  
then $\eta^*$ is feasible and optimal for problem \eqref{eq:projthminfinity}.
We then conclude that $\tau_s(\mu,f)\le T$ and thus the proof.
\end{proof}

\begin{remark}
    It is important to note that the projection problem on the right-hand side of \eqref{eq:winfprojcor} generally does not have a unique solution. 
    Therefore, Corollary \ref{crll:lastcorollary} identifies one possible projection among many.
\end{remark}

\section{Experimental Results}

In this section, we introduce the Climbing Algorithm, a routine capable of retrieving the \Minimal Saturation Threshold.
We show that it can be used to compute the infinity Wasserstein distance and to solve the Wasserstein projection problem.
In the numerical experiments, all optimisation models were solved using IBM CPLEX Optimisation Studio (version 22.1.1) through its Python API. The test instances were derived from greyscale images (represented as two-dimensional histograms) provided by the DOTMark benchmark~\cite{Schrieber2016DOTmarkA}.

\subsection{The Climbing Algorithm}
First, we define the routine of the Climbing Algorithm.
The Climbing Algorithm takes as input two probability measures $\mu,\nu\in\PP(X)$, a utility function $s^{(t)}$ over $N_t$, and a set of positive thresholds $\mathcal{T}:=\{t_i\}_{i=1,\dots,n}$, with $t_i < t_{i+1}$, and returns the smallest element of $\mathcal{T}$ for which there exists a solution to the Nearby Flow Problem (defined as in \eqref{nbf:0}) i.e., the smallest $\bar t \in \mathcal{T}$ for which there exists $\eta\in\MNFs$ such that $|\eta|_{N_{\bar t}}=1$.
To accomplish this, we iteratively solve the Maximum Nearby Flow for growing values of $t\in\mathcal{T}$, and we stop when we find a solution with total mass equal to $1$.
In Algorithm \ref{alg:pure_climb}, we report the pseudo-code of the Climbing Algorithm.
%

%
%

%
Notice that if $\mathcal{T}$ is the image of $c_{x,y}$ for every $(x,y)\in X\times X$, the Climbing Algorithm always returns the \minimal saturation threshold.

In our experiments, we start from $t_0=\min \mathcal{T}$ and, at each iteration, we set $t_{k+1}=\min\{c_{x,y}\; \text{s.t.}\; c_{x,y}>t_k\}$.
While there are alternative approaches to search for the optimal value $t\in\mathcal{T}$, we opted for this simple search routine as it well-suited to search for minimal saturation thresholds that are expected to be small.
In general, depending on the problem at hand, it would be possible to improve the performances of the Climbing Algorithm by tuning the searching method (\textit{e.g.} it could leverage a multiscale approach or a bisection approach).

\begin{algorithm}[t!]
\caption{The Climbing Algorithm }\label{alg:pure_climb}
\begin{algorithmic}
\Require two measures $\mu,\nu$, a cost function $c$, a utility function $s$, a tolerance threshold $\delta$, and the image set of the cost function ordered increasingly $\mathcal{T}:=\{t_i\}_{i=1,\dots,n}$
\Ensure the \minimal saturation threshold  $\tau_s\mn$associated to the utility $s$ and the two measures $\mu$ and $\nu$ 
\State $\Delta m\gets 1$
\While{$\Delta m \geq \delta$}
    \State $\tau \gets \min \mathcal{T}$
    \State $\mathcal{T}\gets \mathcal{T}\backslash \{\tau\}$
    \State $\eta\gets \mbox{Maximize functional $\mathbb{B}_s^{(\tau)}(\eta)$}$
    \State $\Delta m\gets 1-\sum \eta$
\EndWhile
\State Return $\tau$
\end{algorithmic}
\end{algorithm}

\subsection{Computing the infinity Wasserstein Distance}
We calculate the $W_{\infty}$-distance between each pair of images within a category of the DOTmark dataset. 
Specifically, for each pair of images with resolution $N$, we normalize them to define two measures, namely $\mu = \{\mu_{i,j}\}_{i,j=1,\dots, N}$ and $\nu = \{\nu_{i,j}\}_{i,j=1,\dots, N}$.
We then apply the Climbing Algorithm with $s^{(t)} \equiv 1$.
To keep $W_\infty$ bounded, independently of the resolution, we measure the distance between two pixels as follows
\begin{equation}
\label{eq:cost_exp}
    c((i,j),(k,l))=\frac{1}{N}\sqrt{(i-k)^2+(j-l)^2},
\end{equation}
where $N$ is the resolution of the considered image.
According to Theorem~\ref{thm:alternativeWinfformulation}, the $W_{\infty}$-distance corresponds to the output of the Climbing Algorithm. 
In Table~\ref{tab:winfty}, the mean and maximum values of the $W_{\infty}$-distance are presented for various categories.
For completeness, in Table~\ref{tab:winftytimes} we report the average and maximum times needed to compute the $W_\infty$-distance for each couple of images in the categories considered in Table~\ref{tab:winfty}.
From our results, we observe that the ``Shapes'' class has an average $W_\infty$-distance larger than the other classes, leading to a larger average computational time.
We believe this is because the ``Shapes'' class contains elements that are black-and-white images without intermediate greyscale pixels.
In accordance with this conjecture, we observe that for the ``White Noise'' class the average $W_\infty$-distance and the computational time are up to two orders of magnitude lower than the other counterparts, for the $32 \times 32$ resolution, as well as for the $64 \times 64$ one.

\begin{table}[htbp]
\centering
\small
\begin{tabular}{lcccccc}
\toprule
Category & $32 \times 32$ & $64 \times 64$ & 
\\
\midrule
\csvreader[
late after line=\\,
]
{tabDWinf.csv}{1=\cat,2=\avgi, 3=\maxi, 4=\avgii, 5=\maxii, 6=\avgiii, 7=\maxiii}
{\cat & \numD{\avgi} (\numD{\maxi}) & \numD{\avgii} (\numD{\maxii}) & 
}
\bottomrule
\end{tabular}
\caption{Mean values of the $W_{\infty}$-distance, computed over all images within a single category. The maximum values are indicated in parentheses.}
\label{tab:winfty}
\end{table}
\begin{table}[htbp]
\centering
\small
\begin{tabular}{lcccccc}
\toprule
Category & $32 \times 32$ & $64 \times 64$ & 
\\
\midrule
\csvreader[
late after line=\\,
]
{tabTWinf.csv}{1=\cat, 2=\avgi, 3=\maxi, 4=\avgii, 5=\maxii, 6=\avgiii, 7=\maxiii}
{\cat & \numD{\avgi} (\numD{\maxi}) & \numD{\avgii} (\numD{\maxii}) & 
}
\bottomrule
\end{tabular}
\caption{Mean computation times (in seconds) for the $W_{\infty}$-distance, computed over all images within a single category. The maximum values are indicated in parentheses.}
\label{tab:winftytimes}
\end{table}

\subsection{Solving the Wasserstein Projection Problem}
To evaluate the effectiveness of projection computations with respect to the infinity Wasserstein distance, we run the following experiment.
Given a resolution $N=32,64,$ and $128$, for each $N \times N$ image in the ``Classic Images'' and ``Shapes'' categories of the DOTmark dataset, we normalize the image in order to define a probability measure $\mu = \{ \mu_{i,j} \}_{i,j=1,\dots,N}$.
%
%
Subsequently, each measure is padded with a strip of $N/2$ zeros (white pixels) on all four sides, producing a $2N \times 2N$ grid.
A projection of this measure is then calculated on the set $K_{\theta,\mu}$, defined by
\begin{equation*}
    K_{\theta,\mu}: = \Big\{ \nu: \ \nu_{i,j} \leq \theta \max_{i,j=1,\ldots,2N} \mu_{i,j} \Big\},
\end{equation*}
where $0 < \theta < 1$ serves as a limiting parameter that ranges in the set
\linebreak $\{0.975,0.95,0.925,0.9,0.85,0.8\}$.
As per the previous experiment, we measure the distance between two pixels as in \eqref{eq:cost_exp}.
In Tables~\ref{tab:ClaProjWinftyThresholds} and~\ref{tab:ClaProjWinftyTime} we report our results.
In particular, Table~\ref{tab:ClaProjWinftyThresholds} shows the mean and maximum values of the \minimal saturation threshold retrieved by the Climbing Algorithm for the $W_{\infty}$-projection calculated for each image considered.
We recall that owing to Corollary \ref{crll:lastcorollary} the \minimal saturation threshold is equal to the $W_\infty$ distance between the initial image and its projection.
Moreover, the higher the \minimal saturation threshold, the higher the number of iterations needed to compute the projection.
Table~\ref{tab:ClaProjWinftyTime} shows the mean and maximum times required to compute the $W_{\infty}$-projection for different values of the limiting parameter $\theta$. 
%


\begin{table}[t]
    \centering
    \begin{tabular}{c c c c c}
        \toprule
        & $\theta$ & $32 \times 32$ & $64 \times 64$ & $128 \times 128$  \\
        \midrule
        \multirow{6}{*}{\rotatebox{90}{Classic Images}}     
         & \numDi{0.975} & \numD{3.1e-2} (\numD{3.1e-2}) & \numD{1.6e-2} (\numD{1.6e-2}) & \numD{7.8e-3} (\numD{7.8e-3}) \\ 
         & \numDi{0.95} & \numD{3.1e-2} (\numD{3.1e-2}) & \numD{1.6e-2} (\numD{1.6e-2}) & \numD{7.8e-3} (\numD{7.8e-3}) \\ 
         & \numDi{0.925} & \numD{3.1e-2} (\numD{3.1e-2}) & \numD{1.6e-2} (\numD{1.6e-2}) & \numD{7.8e-3} (\numD{7.8e-3}) \\ 
         & \numDi{0.9} & \numD{3.1e-2} (\numD{3.1e-2}) & \numD{1.6e-2} (\numD{1.6e-2}) & \numD{7.8e-3} (\numD{7.8e-3}) \\ 
         & \numDi{0.85} & \numD{3.1e-2} (\numD{3.1e-2}) & \numD{1.6e-2} (\numD{1.6e-2}) & \numD{8.1e-3} (\numD{1.1e-2}) \\ 
         & \numDi{0.8} & \numD{3.1e-2} (\numD{3.1e-2}) & \numD{1.6e-2} (\numD{1.6e-2}) & \numD{9.6e-3} (\numD{1.7 e-2}) \\ 
        \midrule
        \multirow{6}{*}{\rotatebox{90}{Shapes}}     
         & \numDi{0.975} & \numD{3.1e-2} (\numD{3.1e-2}) & \numD{1.6e-2} (\numD{1.6e-2}) & \numD{7.8e-3} (\numD{7.8e-3}) \\ 
         & \numDi{0.95} & \numD{3.1e-2} (\numD{3.1e-2}) & \numD{1.6e-2} (\numD{1.6e-2}) & \numD{1.1e-2} (\numD{1.6e-2}) \\ 
         & \numDi{0.925} & \numD{3.1e-2} (\numD{3.1e-2}) & \numD{1.8e-2} (\numD{3.1e-2}) & \numD{1.3e-2} (\numD{2.3e-2}) \\ 
         & \numDi{0.9} & \numD{3.3e-2} (\numD{4.4e-2}) & \numD{2.3e-2} (\numD{3.5e-2}) & \numD{1.8e-2} (\numD{3.1e-2}) \\ 
         & \numDi{0.85} & \numD{3.8e-2} (\numD{6.3e-2}) & \numD{2.8e-2} (\numD{4.7e-2}) & \numD{2.6e-2} (\numD{4.7e-2}) \\ 
         & \numDi{0.8} & \numD{4.8e-2} (\numD{7.0e-2}) & \numD{3.9e-2} (\numD{6.6e-2}) & \numD{3.6e-2} (\numD{6.6e-2}) \\
         \bottomrule
    \end{tabular}
    \caption{Mean threshold values $\bar{t}$ reached by the Climbing Algorithm in the computation of the $W_{\infty}$-projection. Maximum values are provided in parentheses.}
\label{tab:ClaProjWinftyThresholds}
\end{table}
\begin{table}[htbp]
\centering
\scriptsize
\begin{tabular}{ccccccc}
\toprule
\multirow{2}{*}{$\theta$} & \multicolumn{3}{c}{Classic Images} & \multicolumn{3}{c}{Shapes} \\ 
\cmidrule(lr){2-4} \cmidrule(lr){5-7}
& $32 \times 32$ & $64 \times 64$ & $128 \times 128$ & $32 \times 32$ & $64 \times 64$ & $128 \times 128$\\
\midrule
\csvreader[
late after line=\\,
]
{tabProjWinftyTime.csv}{1=\barr, 2=\avgi, 3=\maxi, 4=\avgii, 5=\maxii, 6=\avgiii, 7=\maxiii, 8=\avgiv, 9=\maxiv, 10=\avgv, 11=\maxv, 12=\avgvi, 13=\maxvi}
{\numDi{\barr} & \numD{\avgi} (\numD{\maxi}) & \numD{\avgii} (\numD{\maxii}) & \numD{\avgiii} (\numD{\maxiii}) & \numD{\avgiv} (\numD{\maxiv}) & \numD{\avgv} (\numD{\maxv}) & \numD{\avgvi} (\numD{\maxvi})}
\bottomrule
\end{tabular}
\caption{Mean computation times (in seconds) for the $W_{\infty}$-projection, with maximum values shown in parentheses.}
\label{tab:ClaProjWinftyTime}
\end{table}

From our results, we observe that in most cases, the threshold computed by the algorithm is $\frac{1}{N}$, where $N$ is the resolution of the image, which indicates that the climbing algorithm needs to solve only one Linear Programming (LP) problem.
%
%
However, as the parameter $\theta$ decreases, the threshold increases.
This effect is more noticeable when we consider the images belonging to the ``Shapes'' category, as they have all their mass concentrated in a small portion of the image.
In contrast, the images belonging to the ``Classic Images'' class have their mass distributed more evenly, hence the \minimal saturation threshold we need to compute a projection is smaller.
As a result, the times needed to retrieve the projections change depending on the class we consider.
Indeed, retrieving the projections for the Shape class is more expensive, as it requires going through more thresholds and thus solving more Linear Programming problems.
For reference, we report a subset of images from the ``Shapes'' category, along with their corresponding $W_{\infty}$-projections for various values of $\theta$, in Figure~\ref{fig:ProjWinftyGrid}.
\begin{figure}[t!]
\centering
\includegraphics[width=\linewidth]{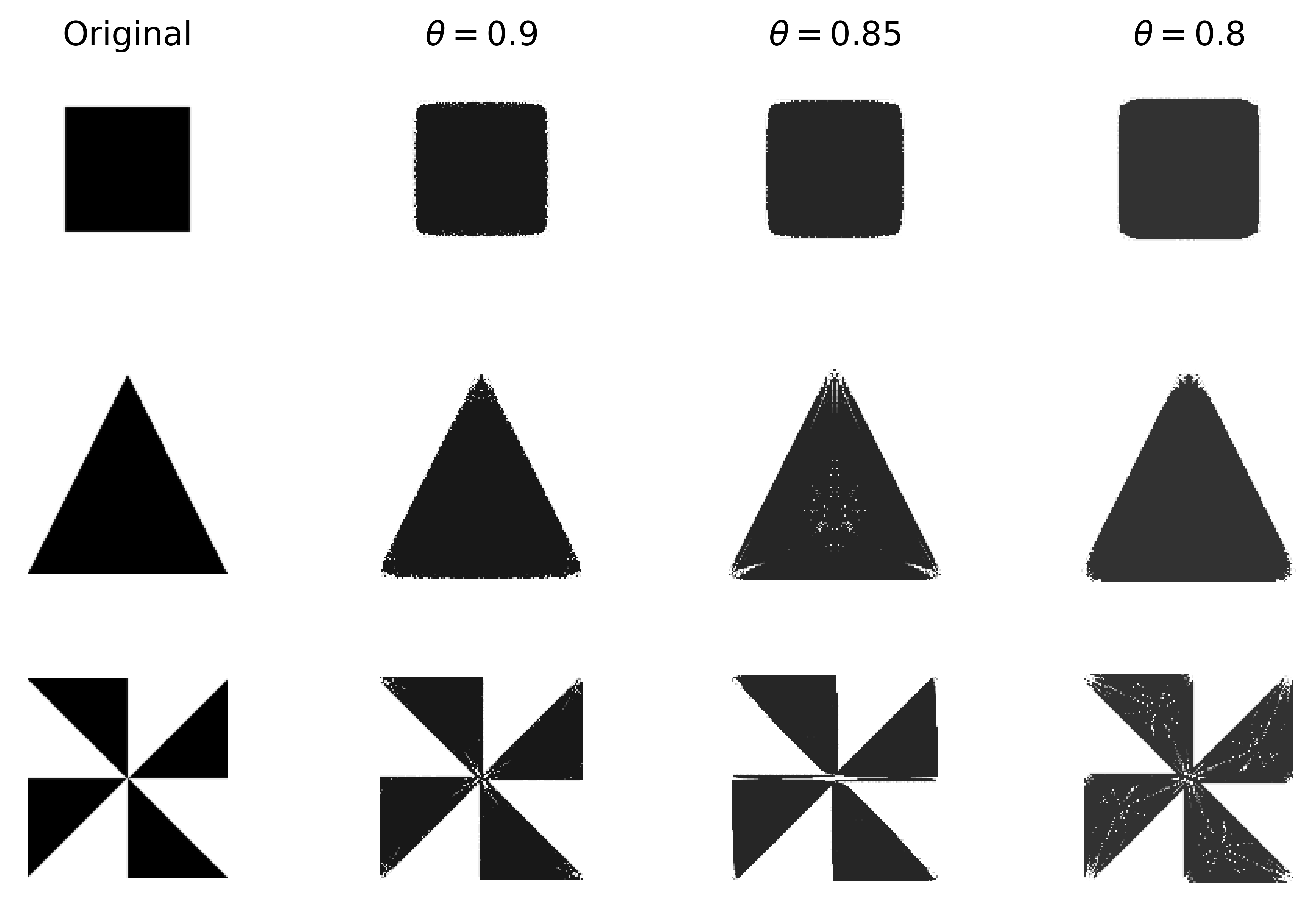}
\caption{$W_{\infty}$-projections of three  ``Shapes'' images for $\theta=0.9,0.85,0.8$.}
\label{fig:ProjWinftyGrid}
\end{figure}

\section{Conclusion}

In this work, we addressed the computational problems associated with the infinity Wasserstein distance and the projection of probability measures on closed subsets of probability spaces.
These problems are crucial in various applied fields, but often remain impractical due to their computational complexity or the absence of algorithms. 
To overcome these limitations, we introduced a novel class of Linear Programming formulations and a computational routine capable of efficiently determining both the infinity Wasserstein distance and the projections with respect to any $p$-Wasserstein distance for $p \in [1,\infty]$.
To validate our approach, we performed extensive numerical experiments on the DOTmark dataset, demonstrating the effectiveness of our algorithms.
%



\bibliographystyle{elsarticle-num} 
\bibliography{biblio}

\end{document}